\documentclass[11pt]{amsart}

\textwidth 16.00cm
\textheight 20cm
\topmargin 0.0cm
\oddsidemargin 0.0cm
\evensidemargin 0.0cm
\parskip 0.0cm

\usepackage{latexsym, amssymb,enumerate}
\usepackage[T1]{fontenc}
\usepackage{textcomp}
\usepackage{appendix}

\newcommand{\be}{\begin{equation}}
\newcommand{\ee}{\end{equation}}
\newcommand{\beq}{\begin{eqnarray}}
\newcommand{\eeq}{\end{eqnarray}}

\def\H{{\Bbb H}}

\def\R{{\mathfrak R}}

\newtheorem{prop}{Proposition}[section]
\newtheorem{theo}[prop]{Theorem}
\newtheorem{lemm}[prop]{Lemma}

\def\begeq{\begin{equation}}
\def\endeq{\end{equation}}

\def\R{\Bbb R}

\def\s{\sigma}

\def\odot{\setbox0=\hbox{$\bigcirc$}\relax \mathbin {\hbox
to0pt{\raise.5pt\hbox to\wd0{\hfil $\wedge$\hfil}\hss}\box0 }}

\numberwithin{equation} {section}

\begin{document}
\author{Jie Wu}
\address{School of Mathematical Sciences, University of Science and Technology
of China Hefei 230026, P. R. China
\and
 Albert-Ludwigs-Universit\"at Freiburg,
Mathematisches Institut
Eckerstr. 1
D-79104 Freiburg
}
\email{jie.wu@math.uni-freiburg.de}
\thanks{The author  is partly supported by SFB/TR71
``Geometric partial differential equations''  of DFG}
\subjclass[2010]{Primary 53C40, 53C42}
\begin{abstract}
This paper gives a new characterization of geodesic spheres in the hyperbolic space in terms of  a ``weighted'' higher order mean curvature.
 Precisely, we show that a compact  hypersurface $\Sigma^{n-1}$ embedded  in  $\H^n$ with $VH_k$ being constant for some $k=1,\cdots,n-1$ is a centered geodesic sphere. Here $H_k$ is the $k$-th normalized mean curvature of $\Sigma$ induced from $\H^n$ and $V=\cosh r$, where $r$ is a hyperbolic distance to a fixed point in $\H^n$. Moreover, this result can be generalized to  a compact hypersurface $\Sigma$ embedded  in  $\H^n$
  with the ratio $V\left(\frac{H_k}{H_j}\right)\equiv\mbox{constant},\;0\leq j< k\leq n-1$ and $H_j$ not vanishing on $\Sigma$.
\end{abstract}

\title{A new characterization of geodesic spheres in the Hyperbolic space}
\maketitle
\section{Introduction}
A fundamental question about hypersurfaces in differential geometry is the rigidity of the spheres. Alexandrov \cite{A}  studied the rigidity of spheres with constant mean curvature. Precisely, he proved that a compact hypersurface embedded in the Euclidean space with constant mean curvature must be a sphere. This is a well-known Alexandrov theorem. Later, in \cite{Ros0} by exploiting a formula  originated by Reilly \cite{Reilly}, Ros generalized Alexandrov's result to the hypersurface embedded in the Euclidean space with constant scalar curvature. Also, Korevaar \cite{Kor} gave another proof to this result by using the classical reflection method due to Alexandrov \cite{A} and he indicated that Alexandrov's reflexion method works
as well for hypersurfaces in the hyperbolic space and the upper hemisphere.
  Later, Ros \cite{Ros} extended his result to any constant $k$-mean curvature and provided another proof in \cite{MR} together with Montiel. Explicitly,
they proved the following result.
\begin{theo} [\cite{Ros,MR}]\label{thmA}
Let $\Sigma^{n-1}$ be a compact hypersurface
 embedded in Euclidean space $\R^n$. If $H_k$ is constant for some $k=1,\cdots,n-1$,
 then $\Sigma$ is a sphere.
\end{theo}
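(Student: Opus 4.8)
The plan is to combine the Hsiung--Minkowski integral formulas with the Heintze--Karcher inequality, in the spirit of Montiel--Ros. Since $\Sigma$ is compact and embedded, by the Jordan--Brouwer separation theorem it bounds a compact domain $\Omega$; fix the outward unit normal $\nu$ and let $X$ denote the position vector of $\R^n$. First I would record the two integral identities to be used: the divergence theorem, which gives $\int_\Sigma \la X,\nu\ra\,dA = n\,\mathrm{Vol}(\Omega)$ (recall $\mathrm{div}\,X = n$), and the Hsiung--Minkowski formula $\int_\Sigma H_{k-1}\,dA = \int_\Sigma H_k\,\la X,\nu\ra\,dA$, valid for each $k$.

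Before these can be exploited I must arrange the correct sign and positivity of the curvatures. I would argue that $\Sigma$ possesses an elliptic point: at a point farthest from a fixed interior point, all principal curvatures with respect to $\nu$ share one sign, so the constant $H_k$ is in fact positive. The crucial step is then to propagate this positivity. Using G\aa rding's theory of hyperbolic polynomials together with the connectedness of $\Sigma$, I would show that $\Sigma$ remains in the G\aa rding cone $\Gamma_k$, so that $H_1,H_2,\ldots,H_k$ are all positive everywhere on $\Sigma$. This positivity is precisely what legitimizes both the Heintze--Karcher inequality and the Newton--Maclaurin inequalities below, and I expect it to be the main obstacle, since it is where the ellipticity of the equation $H_k \equiv \mathrm{const}$, rather than mere algebra, enters.

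With $H_1>0$ established, I would close the argument by a chain of inequalities that is forced to collapse to equalities. Newton's inequalities $H_{j-1}H_{j+1}\le H_j^2$ show that the ratios $H_{j-1}/H_j$ are nondecreasing in $j$, whence $1/H_1 \le H_{k-1}/H_k$ holds pointwise. The Heintze--Karcher inequality gives $n\,\mathrm{Vol}(\Omega) \le \int_\Sigma dA/H_1$, with equality only for a round sphere. Combining these with the two identities above,
\[
n\,\mathrm{Vol}(\Omega) \;\le\; \int_\Sigma \frac{dA}{H_1} \;\le\; \frac{1}{H_k}\int_\Sigma H_{k-1}\,dA \;=\; \frac{1}{H_k}\int_\Sigma H_k\,\la X,\nu\ra\,dA \;=\; \int_\Sigma \la X,\nu\ra\,dA \;=\; n\,\mathrm{Vol}(\Omega),
\]
where the first interior equality uses that $H_k$ is constant, the next is the Minkowski formula, and the last is the divergence identity. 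Hence every inequality is an equality; in particular equality in Heintze--Karcher forces $\Sigma$ to be totally umbilic, i.e. a round sphere. I would finally note that this same template --- Minkowski formula, Newton--Maclaurin inequalities, and a Heintze--Karcher-type estimate --- is what the subsequent sections adapt, with the weight $V=\cosh r$, to the hyperbolic setting $\H^n$.
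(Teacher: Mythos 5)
Your proposal is correct, and it is essentially the Montiel--Ros argument that the paper cites for this statement: produce an elliptic point, use G\r{a}rding's theory to place the principal curvatures in $\Gamma_k$, and then close a chain of (in)equalities built from the Hsiung--Minkowski formula, the Newton--Maclaurin inequalities, and the Heintze--Karcher inequality, with equality forcing umbilicity. This is also precisely the template the paper itself adapts---with the weight $V=\cosh r$, the weighted Minkowski inequality of Proposition \ref{prop1}, and Brendle's hyperbolic Heintze--Karcher inequality---to prove Theorems \ref{mainthm1} and \ref{mainthm2}.
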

There are many works generalizing Theorem \ref{thmA}. For instance, In \cite{Koh1,Koh2}, Koh gave a new characterization of spheres in terms of the ratio of two mean curvature which generalized a previous result of Bivens \cite{Bivens}.  Aledo-Al\'as-Romero \cite{AAR} extended the result to compact space-like hypersurfaces with constant higher order mean curvature in de Sitter space.
In \cite{HLMG}, He-Li-Ma-Ge  studied the compact embedded hypersurfaces with constant  higher order anisotropic mean curvatures.  Recently, Brendle \cite{Brendle} showed an Alexandrov type result for hypersurfaces with constant mean curvature in the warped product spaces including de-Sitter-Schwarzschild manifold. Thereafter, Brendle-Eichmair \cite{BE} extended this result to  any star-shaped convex hypersurface with constant higher mean  curvature in the de-Sitter-Schwarzschild manifold.
For the other generalizing works, see for example \cite{AIR,ALM,AM,BC,HMZ,MX,Mon,Rosen} and the references therein.

In a different direction, there is another kind of  mean curvature with a weight $V$ in the Hyperbolic space $\H^n$ which attracts many interests recently. Here $V=\cosh r$, where $r$ is the hyperbolic distance to a fixed point in $\H^n$. For example, the mean curvature integral $\int_{\Sigma}VH_1d\mu$  with weight $V$ appears naturally in the definition of the quasi-local mass in $\H^n$ and the Penrose inequality for asymptotically hyperbolic graphs \cite{DGS}. The higher order mean curvature integrals $\int_{\Sigma}VH_{2k-1}d\mu$ also appear in \cite{GWW} on the GBC mass for asymptotically hyperbolic graphs. Comparing with the one without weight \cite{LWX,GWW1,GWW2,WX}, the corresponding Alexandrov-Fenchel inequalities with weight also hold in the hyperbolic space. Brendle-Hung-Wang \cite{BHW} and de Lima-Gir\~ao \cite{deLG} established  optimal inequalities for the mean curvature integral $\int_{\Sigma}VH_1d\mu$. More recently, in \cite{GWW}, Ge, Wang and the author established an optimal inequality for the higher order mean curvature integral $\int_{\Sigma}VH_kd\mu$. These inequalities are  related to the Penrose inequality for the asymptotically hyperbolic graphs with a horizon type boundary. See \cite{BHW,DGS,deLG,GWW} for instance.

For the hyperbolic space $\H^n$ with the metric  $b=dr^2+\sinh^2r d\Theta^2,$
where $d\Theta^2$ is the standard round metric  on $\mathbb{S}^{n-1}$, the ``weight'' $V=\cosh r$ appears quite naturally. In fact, it satisfies the following equation \begin{equation}\label{Nb}
\mathbb{N}_b\triangleq \{V\in C^{\infty}(\H^n)|\mbox{Hess}^b V=Vb\}.
\end{equation}
Any element $V$ in $\mathbb{N}_b$ satisfies  that the Lorentzian metric $\gamma=-V^2dt^2+b$ is a static solution to the vacuum Einstein equation with the negative cosmological constant $Ric(\gamma)+n\gamma=0.$
In fact, $\mathbb{N}_b$ is an $(n+1)$-dimensional vector space spanned by   an orthonormal basis of the following functions
$$V_{(0)}=\cosh r,\; V_{(1)}=x^1\sinh r,\;\cdots,\; V_{(n)}=x^n\sinh r,$$
where  $x^1,x^2,\cdots, x^n$ are the coordinate functions  restricted to $\mathbb{S}^{n-1}\subset \R^n$. We   equip the vector space $\mathbb{N}_b$  with a Lorentz inner product $\eta$ with signature $(+,-,\cdots,-)$ such that
\begin{equation}\label{eta}
\eta(V_{(0)}, V_{(0)})=1,\qquad \mbox{and}\quad\eta(V_{(i)}, V_{(i)})=-1\quad\mbox{for}\quad i=1,\cdots,n.
\end{equation}
Note that the subset $\mathbb{N}_b^+$ consists of positive functions is just the interior of the future
lightcone. Let $\mathbb{N}_b^1$ be the subset of $\mathbb{N}_b^+$  of functions $V$ with $\eta(V,V)=1$. One can check easily that
every function $V$ in $\mathbb{N}_b^1$ has the following form
\[V=\cosh {\rm dist}_b(x_0, \cdot),\]
for some $x_0\in \H^n$, where ${\rm dist}_b$ is the distance function with respect  to the metric $b$. Therefore,  in the following we fix
 $V=V_{(0)}=\cosh r.$  Throughout this paper, a centered geodesic sphere means a geodesic sphere with the center $x_0$.
 Comparing with the Euclidean case, $V=1$ since the corresponding one to (\ref{Nb}) is 1-dimensional.

It is natural to ask whether the previous rigidity results can be extend to the hypersurface of the hyperbolic space in terms of  some ``weighted'' higher mean curvature. To studying this problem, the existence of function $V$ makes things  subtle,
it would be difficult to apply the classical Alexandrov's reflection method \cite{A} as in \cite{Kor}   to handle this problem. Hence some new ideas may be needed to attack this problem. Comparing with the proof of Theorem \ref{thmA}, the classical Minkowski integral identity (see (\ref{Minkowski_Identity}) below) is not enough in this case. So one needs to generalize the classical Minkowski integral indentity a little bit.  Note that in \cite{GWW}, in order to establish the optimal inequality for the ``weighted'' higher order mean curvature integral, they generalized (\ref{Minkowski_Identity}) into some inequalities between $H_k$ and $H_{k-1}$ for a convex hypersurface in $\H^n$. In fact, by the same proof, this condition can be weakened into a $k$-convex  hypersurface. Another ingredient to our proof is a special case of Heintze-Karcher-type inequality due to \cite{Brendle}. Based on these two aspects, we obtain the following results.
\begin{theo}\label{mainthm1}
Let $\Sigma^{n-1}$ be a compact  hypersurface embedded in the hyperbolic space $\H^n$. If $V H_k$ is constant for some $k=1,\cdots,n-1$, then $\Sigma$ is a centered geodesic sphere.
\end{theo}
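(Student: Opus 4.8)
The plan is to run a Montiel--Ros type argument adapted to the weight $V$: I would combine the weighted Minkowski formulas (the ``generalized Minkowski'' identities/inequalities of \cite{GWW}) with Brendle's Heintze--Karcher inequality \cite{Brendle} to sandwich three integral quantities between two \emph{equal} values, and then read off the rigidity from the equality case. For the setup, since $\Sigma$ is compact and embedded it bounds a domain $\Omega$ with outer unit normal $\nu$; write $A$ for the shape operator, $\kappa=(\kappa_1,\dots,\kappa_{n-1})$ for the principal curvatures, $\sigma_j$ for the elementary symmetric functions, $P_{k-1}$ for the $(k-1)$-th Newton tensor, and $u=\langle X,\nu\rangle$ for the support function, where $X=\nabla V=\sinh r\,\partial_r$ is the conformal field satisfying $\overline{\nabla}_Y X=VY$ and $\nabla^\Sigma V=X^\top$. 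Three integral facts drive the argument. Integrating $\mathrm{div}_\Sigma(P_{k-1}X^\top)$ over $\Sigma$ (using $\mathrm{div}_\Sigma P_{k-1}=0$ in a space form and $\nabla^\Sigma_Y X^\top=VY-uAY$) yields the Minkowski formula $\int_\Sigma VH_{k-1}\,d\mu=\int_\Sigma uH_k\,d\mu$; applying $\mathrm{div}\,X=nV$ and the divergence theorem gives $\int_\Sigma u\,d\mu=n\int_\Omega V\,dx$; and integrating the weighted field $\mathrm{div}_\Sigma(VP_{k-1}X^\top)$ gives the refinement
\[
(n-k)\int_\Sigma V^2\sigma_{k-1}\,d\mu=k\int_\Sigma uV\sigma_k\,d\mu-\int_\Sigma\langle P_{k-1}X^\top,X^\top\rangle\,d\mu,
\]
which is precisely the generalization of the Minkowski identity into an inequality between $H_k$ and $H_{k-1}$ mentioned in the introduction.

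The main obstacle, and the step I would settle first, is to upgrade the hypothesis into genuine $k$-convexity, namely $\kappa\in\Gamma_k$ (the Gårding cone) at every point, since only this makes both the Newton--MacLaurin inequalities and the positivity $P_{k-1}\ge 0$ available. The input is a geometric elliptic point: at the point of $\Sigma$ farthest from the fixed center $x_0$, the hypersurface touches a geodesic sphere from inside, so all principal curvatures are bounded below by $\coth r>0$ there, i.e. $\kappa\in\Gamma_n\subset\Gamma_k$. Because $VH_k\equiv c$ with $V>0$, we have $\sigma_k=\binom{n-1}{k}H_k=\binom{n-1}{k}c/V>0$ everywhere, and in particular $c>0$; as $\Sigma$ is connected, the continuous curvature map keeps $\kappa(\Sigma)$ inside the single component $\{\sigma_k>0\}$ containing the positive orthant, which is $\Gamma_k$. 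Hence $\Sigma$ is $k$-convex, so $P_{k-1}$ is positive definite, $H_1,\dots,H_{k-1}>0$, and the Newton--MacLaurin chain $H_k/H_{k-1}\le\cdots\le H_1/H_0=H_1$ gives $H_k\le H_1H_{k-1}$, i.e. $1/H_1\le H_{k-1}/H_k$.

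With $k$-convexity in hand I would assemble the chain. Since $VH_k\equiv c$ is constant, $V\sigma_k=\binom{n-1}{k}c$ is constant, so the weighted identity above, after dividing by $VH_k=c$, reads $\int_\Sigma \tfrac{VH_{k-1}}{H_k}\,d\mu=\int_\Sigma u\,d\mu-\tfrac{1}{\beta c}\int_\Sigma\langle P_{k-1}X^\top,X^\top\rangle\,d\mu$ with $\beta=k\binom{n-1}{k}>0$; positivity of $P_{k-1}$ makes the correction nonnegative, so $\int_\Sigma \tfrac{VH_{k-1}}{H_k}\,d\mu\le\int_\Sigma u\,d\mu$. On the other hand, Brendle's Heintze--Karcher inequality (valid since $H_1>0$) together with $\int_\Sigma u\,d\mu=n\int_\Omega V\,dx$ gives $\int_\Sigma u\,d\mu\le\int_\Sigma \tfrac{V}{H_1}\,d\mu$, while Newton--MacLaurin upgrades this to $\int_\Sigma \tfrac{V}{H_1}\,d\mu\le\int_\Sigma \tfrac{VH_{k-1}}{H_k}\,d\mu$. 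Concatenating,
\[
\int_\Sigma u\,d\mu \;\le\; \int_\Sigma \frac{V}{H_1}\,d\mu \;\le\; \int_\Sigma \frac{VH_{k-1}}{H_k}\,d\mu \;\le\; \int_\Sigma u\,d\mu,
\]
so every inequality is an equality. Equality in Brendle's Heintze--Karcher inequality forces $\Sigma$ to be a centered geodesic sphere, which is the claim; equivalently, equality in the weighted Minkowski step forces $\langle P_{k-1}X^\top,X^\top\rangle\equiv 0$, hence $X^\top\equiv0$, i.e. $V$ (equivalently $r$) is constant on $\Sigma$, giving the same conclusion. I expect the genuine difficulty to lie entirely in the second paragraph (establishing $\kappa\in\Gamma_k$ from the mere constancy of $VH_k$), while the integral identities and the final squeeze are then routine once the sign of the correction term and the equality characterization of \cite{Brendle} are in place.
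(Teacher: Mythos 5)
Your proposal is correct and follows essentially the same route as the paper: an elliptic point plus G\r{a}rding's theorem to get $k$-convexity, the weighted Minkowski inequality of \cite{GWW} (Proposition \ref{prop1}) on one side, Brendle's Heintze--Karcher inequality on the other, and Newton--Maclaurin to close the squeeze, with rigidity read off from the vanishing of the $\int_\Sigma (T_{k-1})^{ij}\nabla_i V\nabla_j V\,d\mu$ term. The only difference is cosmetic bookkeeping --- you use the ratio form $H_k/H_{k-1}\le H_1$ and the intermediate quantity $\int_\Sigma V H_{k-1}/H_k\,d\mu$, whereas the paper uses the power form $H_{k-1}\ge H_k^{(k-1)/k}$, $H_1\ge H_k^{1/k}$ to sandwich $\int_\Sigma p\,d\mu$ against $(VH_k)^{-1/k}\int_\Sigma V^{1+1/k}d\mu$ --- and your passing remark that equality in Brendle's inequality alone yields a \emph{centered} sphere is imprecise (it gives only umbilicity), but your alternative argument via $\nabla V\equiv 0$ is exactly the paper's and settles centeredness correctly.
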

The above rigidity result can be generalized to the hypersurface in terms of the ratio of two higher order mean curvatures multiplying by weight.
\begin{theo}\label{mainthm2}
Let $\Sigma^{n-1}$ be a compact hypersurface embedded in the hyperbolic space $\H^n$. If the ratio $V\left(\frac{H_k}{H_j}\right)$ is constant for some $0\leq j< k\leq n-1$ and $H_j$ does not vanish on $\Sigma$, then $\Sigma$ is a centered geodesic sphere.
\end{theo}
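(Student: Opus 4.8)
The plan is to run a Montiel--Ros type argument adapted to the weight $V$, in which Brendle's Heintze--Karcher inequality \cite{Brendle} and the generalized weighted Minkowski formulas are sandwiched so tightly that the constancy of $V(H_k/H_j)$ forces equality, and hence total umbilicity. Write $\Sigma=\partial\Omega$, let $\nu$ be the outer unit normal, and set $u=\langle\bar\nabla V,\nu\rangle$, where $\bar\nabla V=\sinh r\,\partial_r$ is the conformal field satisfying $\bar\nabla_Y(\bar\nabla V)=VY$. The first task is to show that $\Sigma$ is $k$-convex, i.e. that its principal curvatures lie in the G\aa{}rding cone $\Gamma_k$, so that $H_1,\dots,H_k>0$ and the Newton--MacLaurin inequalities apply. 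Since $H_j$ never vanishes and $\Sigma$ is connected we may assume $H_j>0$; evaluating at a point of $\Sigma$ farthest from the center $x_0$ produces an elliptic point, where every $H_i>0$, so the constant value of $V(H_k/H_j)$ is positive and $H_k>0$ somewhere. Propagating this positivity to all of $\Sigma$, using its connectedness together with the sign information carried by the constant ratio, is the first delicate step.

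The analytic input consists of three items. First, the weighted Hsiung--Minkowski formulas $\int_\Sigma VH_{i-1}\,d\mu=\int_\Sigma uH_i\,d\mu$ for $1\le i\le n-1$, obtained by integrating $\mathrm{div}_\Sigma(T_{i-1}X^\top)=c_{n,i}(VH_{i-1}-uH_i)$ against the divergence-free Newton tensors $T_{i-1}$, where $X^\top$ is the tangential part of $\bar\nabla V$ and $c_{n,i}>0$ is a dimensional constant. Second, the divergence identity $\int_\Sigma u\,d\mu=n\int_\Omega V\,d\mathrm{vol}$, a consequence of $\Delta V=nV$. Third, the special case of Brendle's Heintze--Karcher inequality $\int_\Sigma \frac{V}{H_1}\,d\mu\ge n\int_\Omega V\,d\mathrm{vol}$, with equality if and only if $\Sigma$ is a centered geodesic sphere, together with the generalized Minkowski inequality of \cite{GWW} relating $\int_\Sigma VH_k\,d\mu$ and $\int_\Sigma VH_{k-1}\,d\mu$ for $k$-convex hypersurfaces, which I will use in the form weakened to the $k$-convex setting as indicated in the introduction.

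With these in hand, the strategy is to convert the pointwise constancy of $V(H_k/H_j)$ into a chain of integral inequalities forced to collapse to equalities. Because the quotients $H_i/H_{i-1}$ are non-increasing in $i$ by Newton--MacLaurin, the ratio $H_k/H_j$ is controlled from both sides by powers of consecutive quotients; combining this with the constant value of $V(H_k/H_j)$ and the Minkowski formulas for the indices $j$ and $k$ reduces the ratio statement to a degree-one comparison for $\frac{V}{H_1}$. Feeding this comparison into Brendle's inequality and using $\int_\Sigma u\,d\mu=n\int_\Omega V$, with the reverse comparison supplied by the \cite{GWW} inequality, produces a cyclic string whose two ends coincide, so every inequality in it, in particular the Heintze--Karcher one, must be an equality. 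Equality there forces $\Sigma$ to be totally umbilic, hence a geodesic sphere; on such a sphere all $H_i$ are constant, so the constancy of $V(H_k/H_j)$ forces $V$, and therefore $r$, to be constant on $\Sigma$, i.e. the sphere is centered at $x_0$. The case $j=0$ reads $VH_k\equiv\text{const}$ and is exactly Theorem \ref{mainthm1}, which may alternatively be invoked directly.

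I expect two genuine obstacles. The first is establishing $k$-convexity globally: an elliptic point gives membership in $\Gamma_k$ only on a nonempty open set, and upgrading this to all of $\Sigma$ without a constant-$H_k$ hypothesis, having only control of a ratio, requires a careful connectedness argument exploiting that $V(H_k/H_j)$ is a positive constant and $H_j>0$. The second is the telescoping for general $0\le j<k$: tracking the equality cases of the iterated Newton--MacLaurin inequalities so that the final collapse forces pointwise umbilicity, rather than merely an averaged identity, is where the argument must be most careful. Once both are handled, the rigidity in the equality cases of Heintze--Karcher and of Newton--MacLaurin delivers the centered geodesic sphere.
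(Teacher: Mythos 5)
Your toolkit is the right one --- elliptic point plus G\r{a}rding's theorem for $k$-convexity, the weighted Minkowski formulas, Newton--MacLaurin, Brendle's inequality, and reduction of $j=0$ to Theorem \ref{mainthm1} --- and your endgame (umbilicity implies geodesic sphere, then constancy of $V(H_k/H_j)$ forces $V$, hence $r$, to be constant, so the sphere is centered) is sound. But the heart of the proof is missing, and it is precisely the step you defer as your second ``obstacle.'' Your proposed cyclic string cannot close in a single pass when $j\geq 1$. Writing $\alpha=V(H_k/H_j)$, Brendle plus Newton--MacLaurin does give the pointwise bound $V/H_1\leq \alpha^{-1/(k-j)}V^{1+1/(k-j)}$, hence $\int_\Sigma p\,d\mu\leq \alpha^{-1/(k-j)}\int_\Sigma V^{1+1/(k-j)}d\mu$; but the ``reverse comparison supplied by the \cite{GWW} inequality'' is not available at this stage. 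Inequality (\ref{eq1}) gives $\int_\Sigma pVH_k\,d\mu\geq\int_\Sigma V^2H_{k-1}\,d\mu$, and under the ratio hypothesis no single quantity $VH_i$ is constant, so no constant can be pulled out of these integrals to produce a lower bound on $\int_\Sigma p\,d\mu$: the best you can do is $\int_\Sigma pVH_k\,d\mu=\alpha\int_\Sigma pH_j\,d\mu$, and the factor $H_j$ left inside the integral blocks the loop from closing.

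What actually unlocks the theorem is a descent step that you never write down: convert $\alpha\int_\Sigma pH_j\,d\mu$ by the classical Minkowski identity (\ref{Minkowski_Identity}) at index $j$ into $\alpha\int_\Sigma VH_{j-1}\,d\mu$, so that (\ref{eq1}) becomes $\int_\Sigma V\bigl(VH_{k-1}-\alpha H_{j-1}\bigr)d\mu\leq 0$, while Newton--MacLaurin gives the \emph{pointwise} reverse sign $VH_{k-1}\geq \alpha H_{j-1}$ (the quotients $H_i/H_{i-1}$ are non-increasing in $i$). Hence $VH_{k-1}=\alpha H_{j-1}$ identically: the constant-ratio hypothesis propagates down one level. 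This is the paper's mechanism; iterating $j$ times yields $VH_{k-j}\equiv\alpha$, and Theorem \ref{mainthm1} --- where Brendle's inequality is genuinely used --- finishes. (Alternatively, after one descent step the pointwise identity $H_{k-1}/H_{j-1}=H_k/H_j$ forces equality in the intermediate Newton inequalities, hence umbilicity at every point, so for $j\geq1$ Brendle's inequality is not needed at all; either way the descent is unavoidable, and a plan that routes everything through equality in the Heintze--Karcher inequality without performing it is not yet a proof.) Two smaller points: your first ``obstacle'' is not delicate --- $H_j\neq0$ everywhere, $H_j>0$ at the farthest point, connectedness, then $H_k=\alpha H_j/V>0$ everywhere and G\r{a}rding's theorem \cite{Garding} settle global $k$-convexity in two lines, exactly as in the paper; and the equality case of Proposition \ref{B} is ``totally umbilical,'' not ``centered geodesic sphere'' as stated in your third input, although your later use of it recovers the centering correctly.
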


\section{Preliminaries}
In this section, first let us  recall some basic definitions and properties of higher order mean curvature.

Let $\s_k$ be the $k$-th elementary symmetry function $\s_k:\R^{n-1}\to \R$ defined by
\[\s_k(\Lambda)=\sum_{i_1<\cdots<i_{k}}\lambda_{i_1}\cdots\lambda_{i_k}\quad  \hbox{ for } \Lambda=(\lambda_1, \cdots,\lambda_{n-1})\in \R^{n-1}.\]
 For a symmetric matrix $B$, denote $\lambda(B)=(\lambda_1(B),\cdots,\lambda_n(B))$ be the eigenvalues of $B$. We set
\[
\s_k(B):=\s_k(\lambda(B)).
\]
 The Garding cone $\Gamma_k$ is defined by
\begin{equation}\label{G-cone}
\Gamma_k=\{\Lambda \in \R^{n-1} \, |\,\s_j(\Lambda)>0, \quad\forall\, j\le k\}.
\end{equation}
A symmetric matrix $B$ is called belong to $\Gamma_k$  if $\lambda(B)\in \Gamma_k$.
Let
\begin{equation}\label{Hk}
H_k=\frac{\s_k}{C_{n-1}^k},
\end{equation}
be the normalized $k$-th elementary symmetry function. As a convention, $H_0=1$. We  have the following Newton-Maclaurin inequalities. For the proof, we refer to a survey of Guan \cite{Guan}.

\begin{lemm} \label{lem} For $1\leq j<k\leq n-1$ and $\Lambda\in \Gamma_k$, we have the following Newton-Maclaurin inequality
\begin{equation}\label{N-M}
H_j\geq (H_k)^{\frac{j}{k}}.
\end{equation}
Moreover,  equality holds in (\ref{N-M}) at $\Lambda$ if and only if $\Lambda=c(1,1,\cdots,1)$.
\end{lemm}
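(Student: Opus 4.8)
The plan is to deduce the inequality from the \emph{quadratic Newton inequalities}
\begin{equation*}
H_{i-1}H_{i+1}\leq H_i^2,\qquad i=1,\dots,k-1,
\end{equation*}
and then to telescope. Since $\Lambda\in\Gamma_k$ we have $H_0=1$ and $H_1,\dots,H_k>0$; note that for $i\le k-1$ the index $i+1$ is at most $k$, so every quantity above is positive and all ratios and roots below are well defined. Setting $r_i=H_i/H_{i-1}$ for $1\le i\le k$, the Newton inequalities read $r_{i+1}\le r_i$, i.e. the finite sequence $r_1\ge r_2\ge\cdots\ge r_k$ is non-increasing. As $H_0=1$, we may write $H_j=\prod_{i=1}^{j}r_i$ and $H_k=\prod_{i=1}^{k}r_i$, so that, after taking logarithms, the asserted bound $H_j^{1/j}\ge H_k^{1/k}$ becomes
\begin{equation*}
\frac1j\sum_{i=1}^{j}\log r_i\ \ge\ \frac1k\sum_{i=1}^{k}\log r_i.
\end{equation*}
This is precisely the statement that the average of the first $j$ terms of a non-increasing sequence dominates the average of its first $k\ge j$ terms, which follows from a one-line rearrangement using $\log r_i\ge\log r_{j+1}$ for $i\le j$ and $\log r_i\le\log r_{j+1}$ for $i>j$.

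The substantive point is therefore the chain of quadratic Newton inequalities. To prove them I would exploit the real-rootedness of
\begin{equation*}
P(t)=\prod_{i=1}^{n-1}(t+\lambda_i)=\sum_{l=0}^{n-1}\binom{n-1}{l}H_l\,t^{\,n-1-l},
\end{equation*}
whose roots $-\lambda_1,\dots,-\lambda_{n-1}$ are all real because $\Lambda\in\R^{n-1}$; since each inequality is a closed condition, by continuity I may assume the $\lambda_i$ are distinct and nonzero. By Rolle's theorem the derivative of a real polynomial all of whose roots are real again has only real roots. Differentiating $P$ a total of $n-i-2$ times, passing to the reciprocal polynomial, and differentiating $i-1$ further times isolates the three consecutive coefficients $H_{i-1},H_i,H_{i+1}$ in a real-rooted quadratic; with the binomial normalization this quadratic is a positive multiple of $H_{i-1}t^2+2H_i t+H_{i+1}$, and the non-negativity of its discriminant is exactly $H_i^2\ge H_{i-1}H_{i+1}$. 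For the index $i=1$ there is the direct identity
\begin{equation*}
H_1^2-H_2=\frac{1}{(n-1)^2(n-2)}\sum_{p<q}(\lambda_p-\lambda_q)^2,
\end{equation*}
which gives $H_1^2\ge H_2$ with equality if and only if $\lambda_1=\cdots=\lambda_{n-1}$.

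It remains to settle equality. If equality holds in $H_j^{1/j}=H_k^{1/k}$, then equality holds in the averaging step, forcing $\log r_1=\cdots=\log r_k$, and in particular $r_1=r_2$, i.e. $H_2=H_1^2$; the displayed variance identity then yields $\lambda_1=\cdots=\lambda_{n-1}$, that is $\Lambda=c(1,\dots,1)$. Conversely, for such a $\Lambda$ one has $H_i=c^i$, whence $H_j=c^j=(c^k)^{j/k}=H_k^{j/k}$, so equality indeed holds. The step I expect to be the main obstacle is the clean execution of the Rolle/differentiation argument for a general index $i$ — in particular, keeping track of the combinatorial normalizations so that the reduced quadratic comes out precisely as $H_{i-1}t^2+2H_i t+H_{i+1}$ and the discriminant condition reads exactly $H_i^2\ge H_{i-1}H_{i+1}$; the telescoping and the equality analysis are elementary once this chain is established.
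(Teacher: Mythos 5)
Your proof is correct, but note that the paper does not actually prove this lemma itself: it states it as a known result and refers the reader to Guan's survey \cite{Guan}, so there is no internal proof to compare against. What you have written is essentially the classical argument that such references contain, and all three of its ingredients check out. First, the quadratic Newton inequalities $H_{i-1}H_{i+1}\le H_i^2$ hold for arbitrary real $\Lambda$, and your differentiation bookkeeping does come out right: after $n-i-2$ derivatives of $P$, reversal, and $i-1$ further derivatives, one is left with $\frac{(n-1)!}{2}\left(H_{i+1}t^2+2H_it+H_{i-1}\right)$, whose discriminant condition is exactly $H_i^2\ge H_{i-1}H_{i+1}$ (the genericity reduction to distinct nonzero $\lambda_i$ is only needed to keep the reversal step and the degree of this quadratic honest, and passes to the limit since each inequality is closed). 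Second, on $\Gamma_k$ the quantities $H_0,\dots,H_k$ are all positive, so the ratios $r_i=H_i/H_{i-1}$ form a positive non-increasing sequence and (\ref{N-M}) reduces to comparing the average of the first $j$ terms of $\log r_i$ with that of the first $k$; your one-line rearrangement is valid, and equality in it does force $r_1=\cdots=r_k$. Third, since $1\le j<k$ forces $k\ge 2$, the specialization $r_1=r_2$, i.e. $H_1^2=H_2$, is legitimate, and the variance identity (which you state correctly) then yields $\Lambda=c(1,\dots,1)$, with the converse immediate. A further point in your favor: you prove the lemma in the generality it is actually stated in, namely $\Lambda\in\Gamma_k$ rather than all $\lambda_i>0$, which is exactly right because Newton's inequalities need no positivity and the telescoping only uses positivity of $H_i$ for $i\le k$.
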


Next, we recall a Minkowski relation proved in \cite{GWW}. In order to state this result, let us give some definitions and notations.
Let $V=cosh(r)$ and $p=\langle DV,\xi\rangle>0$ be the support function. The $k$-th Newton transformation is defined as follows
\begin{equation}\label{Newtondef}
(T_k)^{i}_{j}(B):=\frac{\partial \s_{k+1}}{\partial b^{i}_{j}}(B),
\end{equation}
where $B=(b^{i}_{j})$.  Recall we have the following Minkowski identity with weight $V$ (see \cite{ALM}),
\begin{equation}\label{Minkowski}
\nabla_j(T_{k-1}^{ij}\nabla_i V)=-kp\sigma_{k}+\left(n-k\right) V\sigma_{k-1}.
\end{equation}
Integrating above equality and recalling (\ref{Hk}), we get
\begin{equation}\label{Minkowski_Identity}
\int_{\Sigma} p H_{k}d\mu= \int_{\Sigma}V H_{k-1} d\mu.
\end{equation}
This is the classical Minkowski integral identity in $\H^n$. In \cite{GWW}, (\ref{Minkowski_Identity}) is generalized to some inequalities between $H_k$ and $H_{k-1}$, where they called {\it Minkowski  integral formulae}. These integral inequalities play an important role in this paper.  If the principles of a hypersurface belong to the G\r{a}rding cone $\Gamma_k$, it is called $k$-convex.

\begin{prop}[\cite{GWW}]\label{prop1}
Let $\Sigma^{n-1}$ be a hypersurface isometric immersed in the hyperbolic space $\H^n$, we have
\begin{equation}\label{eq2}
\int_{\Sigma}pVH_kd\mu= \int_{\Sigma}V^2H_{k-1}d\mu+\frac{1}{kC_{n-1}^k}\int_{\Sigma}(T_{k-1})^{ij}\nabla_iV \nabla_j Vd\mu.
\end{equation}
Moreover, if  $\Sigma$ is  $k$-convex, $k\in\{1,\cdots,n-1\}$, then we have
\begin{equation}\label{eq1}
\int_{\Sigma}pVH_kd\mu\geq \int_{\Sigma}V^2H_{k-1}d\mu.
\end{equation}
Equality holds if and only if $\Sigma$ is a centered geodesic sphere in $\H^n$.
\end{prop}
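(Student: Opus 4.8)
The plan is to derive the identity (\ref{eq2}) directly from the pointwise weighted Minkowski identity (\ref{Minkowski}) by multiplying through by the potential $V$, integrating over $\Sigma$, and integrating by parts once. Assuming $\Sigma$ is closed, the divergence structure on the left produces no boundary term, so that
\[
\int_\Sigma V\,\nabla_j\bigl(T_{k-1}^{ij}\nabla_i V\bigr)\,d\mu=-\int_\Sigma (T_{k-1})^{ij}\nabla_i V\,\nabla_j V\,d\mu .
\]
On the right-hand side of (\ref{Minkowski}), multiplied by $V$ and integrated, I would convert the elementary symmetric functions to normalized ones via (\ref{Hk}), writing $\sigma_k=C_{n-1}^k H_k$ and $\sigma_{k-1}=C_{n-1}^{k-1}H_{k-1}$, which turns the two terms into $-k C_{n-1}^k\int_\Sigma pVH_k\,d\mu$ and $(n-k)C_{n-1}^{k-1}\int_\Sigma V^2H_{k-1}\,d\mu$. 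I should also record at this stage the divergence-free property $\nabla_j(T_{k-1})^{ij}=0$ of the Newton tensor in a space form, which is exactly what renders (\ref{Minkowski}) and the integration by parts consistent.

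The second step is a purely combinatorial simplification: the elementary identity $k C_{n-1}^k=(n-k)C_{n-1}^{k-1}$ lets me factor out the common constant $k C_{n-1}^k$ from the two terms above. Dividing through by this constant and rearranging then yields precisely
\[
\int_{\Sigma}pVH_k\,d\mu= \int_{\Sigma}V^2H_{k-1}\,d\mu+\frac{1}{kC_{n-1}^k}\int_{\Sigma}(T_{k-1})^{ij}\nabla_iV\,\nabla_j V\,d\mu ,
\]
which is (\ref{eq2}). No convexity hypothesis is needed up to this point; the identity holds for any closed immersed $\Sigma$.

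For the inequality (\ref{eq1}) and its rigidity, everything reduces to controlling the last term of (\ref{eq2}). The plan is to invoke the standard algebraic fact (see the survey \cite{Guan}) that when the principal curvatures of $\Sigma$ lie in the G\r{a}rding cone $\Gamma_k$, i.e. $\Sigma$ is $k$-convex, the Newton transformation $T_{k-1}$ is positive definite: its eigenvalues are the quantities $\sigma_{k-1}(\lambda|i)$ (the $(k-1)$-th symmetric function of the principal curvatures with $\lambda_i$ omitted), which are strictly positive on $\Gamma_k$. Hence $(T_{k-1})^{ij}\nabla_i V\,\nabla_j V\ge 0$ pointwise, the last integral in (\ref{eq2}) is nonnegative, and (\ref{eq1}) follows at once. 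For the equality case, nonnegativity of the integrand forces $(T_{k-1})^{ij}\nabla_i V\,\nabla_j V\equiv 0$ on $\Sigma$; positive definiteness of $T_{k-1}$ then forces the tangential gradient $\nabla V$ to vanish identically. Thus $V=\cosh r$, and hence $r$, is constant along $\Sigma$, so $\Sigma$ lies in a level set of the distance to the fixed point $x_0$ and, being a closed hypersurface, is a centered geodesic sphere. The converse is immediate, since $V$ is constant on such a sphere and the gradient term vanishes.

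The main obstacle is the algebraic positivity theory for the Newton tensor: one needs not merely that $T_{k-1}$ is positive \emph{semi}-definite on $\Gamma_k$, but that it is positive \emph{definite}, since it is exactly this strictness that upgrades (\ref{eq1}) to the sharp rigidity conclusion $\nabla V\equiv 0$. A secondary point requiring care is the justification of the integration by parts with no boundary contribution, which uses that $\Sigma$ is compact without boundary, together with the space-form Codazzi identity guaranteeing $\nabla_j(T_{k-1})^{ij}=0$ so that no extraneous curvature terms appear.
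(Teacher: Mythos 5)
Your proposal is correct and follows essentially the same route as the paper: multiply the weighted Minkowski identity (\ref{Minkowski}) by $V$, integrate by parts over the closed hypersurface to get (\ref{eq2}), invoke positive definiteness of $T_{k-1}$ on the cone $\Gamma_k$ (the paper cites Proposition 3.2 of \cite{BC}, you cite the eigenvalue formula from \cite{Guan} --- the same standard fact) to get (\ref{eq1}), and in the equality case conclude $\nabla V\equiv 0$, hence $\Sigma$ is a centered geodesic sphere. The only cosmetic difference is that your appeal to $\nabla_j(T_{k-1})^{ij}=0$ is not actually needed for the integration by parts once (\ref{Minkowski}) is taken as given, since that identity is already in divergence form.
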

\begin{proof}
This has been proved in \cite{GWW} under the condition that the hypersurface is convex. By the same argument, the result holds for a $k$-convex hypersurface.
 For the convenience of readers, we include the proof.
In view of (\ref{Minkowski}) and (\ref{Hk}), we have
$$\frac{1}{kC_{n-1}^k}\nabla_j(T_{k-1}^{ij}\nabla_i V)=-pH_k+H_{k-1}V.$$
Multiplying above  equation by the function $V$ and integrating by parts, one obtains the desired result (\ref{eq2}). Under the assumption that $\Sigma$ is  $k$-convex,  the $(k-1)$-th Newton tensor  $T_{k-1}$ is positively definite (the proof see the one of Proposition $3.2$ in \cite{BC} for instance), that is,
$$(T_{k-1})^{ij}\nabla_i V \nabla_j V\geq 0,$$
thus (\ref{eq1}) holds. When the equality holds, we have $\nabla V=0$ which implies that $\Sigma$ is a geodesic sphere in $\H^n.$
\end{proof}

Finally, we need  a special case of the Heintze-Karcher-type
inequality due to Brendle \cite {Brendle}.
\begin{prop}[Brendle]\label{B}
Let $\Sigma$ be a compact hypersurface embedded in $\H^n$ with positive mean curvature (i.e. $H_1>0$), then
$$\int_{\Sigma}pd\mu\leq \int_{\Sigma} \frac{V}{H_1}d\mu.$$
Moreover, equality holds if and only if $\Sigma$ is totally umbilical.
\end{prop}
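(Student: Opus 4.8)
The plan is to follow Brendle's strategy: first reduce the stated inequality to a bulk inequality over the enclosed region, then analyze the inward normal exponential map. Since $\Sigma$ is compact and embedded, the Jordan--Brouwer separation theorem provides a compact domain $\Omega\subset\H^n$ with $\partial\Omega=\Sigma$; let $\xi$ be the outward unit normal, so $p=\la DV,\xi\ra$ is the normal derivative of $V$. Because $V=\cosh r$ satisfies $\mathrm{Hess}\,V=Vb$, tracing gives $\Delta V=nV$, and the divergence theorem yields
\[
\int_\Sigma p\,d\mu=\int_\Sigma\la DV,\xi\ra\,d\mu=\int_\Omega\Delta V\,dv=n\int_\Omega V\,dv .
\]
Thus the assertion is equivalent to the weighted Heintze--Karcher inequality $\int_\Sigma\frac{V}{H_1}\,d\mu\ge n\int_\Omega V\,dv$, which is the form I would actually prove.

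Next I would parametrize $\Omega$ by the inward normal exponential map $\Phi(x,t)=\exp_x(-t\,\xi(x))$. For $0\le t<T(x)$, with $T(x)$ the cut distance, $\Phi$ is a diffeomorphism onto $\Omega$ up to a null set, so, writing $J(x,t)$ for its Jacobian, $\int_\Omega V\,dv=\int_\Sigma\int_0^{T(x)}V(\Phi)\,J\,dt\,d\mu$. Along each geodesic $V$ obeys $\phi''=\phi$ with $\phi(0)=V$ and $\phi'(0)=-p$, while the Jacobi factors $a_i(t)=\cosh t-\lambda_i\sinh t$, $J=\prod_i a_i$, satisfy $a_i(0)=1$ and $a_i'(0)=-\lambda_i$, the $\lambda_i$ being the principal curvatures at $x$. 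Since $T(x)$ is at most the first focal distance, all $a_i$ stay positive on $[0,T(x))$, and the arithmetic--geometric mean inequality together with $\frac{1}{n-1}\sum_i a_i=\cosh t-H_1\sinh t$ yields $J\le(\cosh t-H_1\sinh t)^{n-1}$, collapsing each per-geodesic integral onto the geodesic-sphere model.

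The hard part is that this estimate cannot be closed pointwise. Indeed, for an off-centre geodesic sphere (which is totally umbilic and, as I verify below, produces equality only after integration) the per-geodesic quantity $\int_0^{T(x)}V(\Phi)J\,dt$ lies strictly above $\frac1n\frac{V}{H_1}$ on part of $\Sigma$ and strictly below on the rest, the two contributions cancelling only globally. Closing the inequality therefore requires a genuinely global comparison: this is the delicate core of Brendle's argument, where the warped-product/conformal structure of $DV$ (encoded in $\mathrm{Hess}\,V=Vb$) is exploited to absorb the non-pointwise error after integrating over $\Sigma$. An alternative route that localizes the difficulty differently is the weighted Reilly formula of Qiu--Xia: solve a boundary value problem on $\Omega$, use the static equations $\mathrm{Hess}\,V=Vb$ and $\mathrm{Ric}=-(n-1)b$ to eliminate the interior curvature terms, and apply Cauchy--Schwarz on the boundary integral. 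Either way, establishing this global step is the main obstacle.

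Finally, for equality I would argue that the arithmetic--geometric mean step forces all $\lambda_i$ to coincide at every point, so $\Sigma$ is totally umbilic, hence a geodesic sphere of some centre $x_1$ and radius $\rho$. Conversely, every such sphere realizes equality: writing $\tilde r=\mathrm{dist}(x_1,\cdot)$ and $\tilde V=\cosh\tilde r$, the field $Y=\tilde V\,DV-V\,D\tilde V$ is divergence free, since its interior contributions $n\tilde V V$ cancel; its vanishing flux across the sphere gives $\cosh\rho\int_\Sigma p\,d\mu=\sinh\rho\int_\Sigma V\,d\mu$, i.e. $\int_\Sigma p\,d\mu=\tanh\rho\int_\Sigma V\,d\mu=\int_\Sigma\frac{V}{H_1}\,d\mu$, because $H_1\equiv\coth\rho$.
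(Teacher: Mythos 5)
Your proposal is not a complete proof: its central inequality is never established, and you say so yourself. The setup is sound and even insightful --- the reduction $\int_\Sigma p\,d\mu=n\int_\Omega V\,dv$ via $\Delta V=nV$ is correct, the Jacobian factors $\prod_i(\cosh t-\lambda_i\sinh t)$ and the arithmetic--geometric mean step are correct, and your observation that the resulting per-geodesic estimate genuinely fails pointwise (it overshoots $\tfrac1n\tfrac{V}{H_1}$ near the point of $\Sigma$ closest to $x_0$ and undershoots at the farthest point, for an off-centre sphere) is accurate and explains precisely why the classical Montiel--Ros argument does not transplant to this weighted hyperbolic setting. But having diagnosed the obstruction, you do not overcome it: the passage from the failed pointwise estimate to the global inequality $\int_\Sigma\frac{V}{H_1}\,d\mu\ge n\int_\Omega V\,dv$ is exactly the theorem, and your text replaces it with a description of what Brendle's argument ``exploits'' and a pointer to an alternative weighted Reilly-formula route, neither of which is carried out. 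Consequently the equality analysis also collapses: since the inequality is not closed through the AM--GM step pointwise, you cannot conclude that equality forces all $\lambda_i$ to coincide; only your converse computation (the divergence-free field $Y=\tilde V\,DV-V\,D\tilde V$ showing that geodesic spheres achieve equality) is complete.

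For comparison, the paper does not attempt the normal-exponential-map route at all: it invokes Brendle's result as a black box, noting only that Brendle's proof proceeds by choosing a suitable geometric flow, proving a monotonicity property along it, and analyzing the asymptotic behavior of the flow to extract the inequality. That flow/monotonicity mechanism (or, alternatively, a weighted Reilly identity) is precisely the global input your proposal identifies as necessary but does not supply; without it, what you have is a correct reduction plus a correct explanation of why the easy argument fails, not a proof of the proposition.
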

The idea to prove this inequality is first to choose a suitable geometric flow, then prove the monotonicity, finally analyze the asymptotical behavior to obtain the desired inequality.
This Heintze-Karcher type
inequality plays an important role in \cite{deLG} to establish a weighted Alexandrov-Fenchel equality for mean curvature integral in the hyperbolic space. This inequality is also a main  ingredient  for this  paper.
For the proof of Proposition \ref{B}, we refer the readers to \cite{Brendle}.

\section{Proof of the main theorems}
With all the preparing work, now we are ready to prove our main theorems.
\vspace{3mm}
\

\noindent {\it Proof of  Theorem \ref{mainthm1}.}
Since the hypersurface $\Sigma$ is compact,  thus at the point where the distance function $r$ of $\H^n$ attains its maximum, all the principal curvatures are positive. This together with the simple fact $V=\cosh r>0$ yields that $VH_k$ is a positive constant and thus $H_k$ is  positive everywhere in $\Sigma$. From the result of G\r{a}rding \cite{Garding}, we know that  the principal curvatures of $\Sigma$ belong to the Garding cone $\Gamma_k$ defined in (\ref{G-cone}).

It follows from (\ref{eq1}) that
\begin{equation}\label{eq3}
 V H_k\int_{\Sigma}pd\mu=\int_{\Sigma} pVH_kd\mu\geq \int_{\Sigma}V^2H_{k-1}d\mu.
\end{equation}
By the Newton-Maclaurin inequality (\ref{N-M}), we have
$$H_{k-1}\geq H_k^{\frac{k-1}{k}},$$
thus
\begin{equation}\label{eq4}
\int_{\Sigma}V^2H_{k-1}d\mu\geq \int_{\Sigma} V^2 H_k^{\frac{k-1}{k}}d\mu=(VH_k)^{\frac{k-1}{k}}\int_{\Sigma}V^{1+\frac 1k}d\mu.
\end{equation}
Hence (\ref{eq3}) and (\ref{eq4}) yield
\begin{equation}\label{eq5}
\int_{\Sigma}pd\mu\geq (VH_k)^{-\frac 1k}\int_{\Sigma}V^{1+\frac 1k}d\mu,
\end{equation}
and  equality holds if and only $\Sigma$ is a  geodesic sphere.
On the other hand, applying Proposition \ref{B} and the Newton-Maclaurin inequality (\ref{N-M}) we derive that
\begin{equation}\label{eq6}
\int_{\Sigma}pd\mu\leq \int_{\Sigma} \frac{V}{H_1}d\mu\leq \int_{\Sigma}\frac{V}{H_k^{\frac 1k}}d\mu=(VH_k)^{-\frac 1k}\int_{\Sigma} V^{1+\frac 1k}d\mu.
\end{equation}
Finally combining (\ref{eq5}) and (\ref{eq6}) together, we complete the proof.
\qed

\vspace{8mm}
\noindent {\it Proof of  Theorem \ref{mainthm2}.}
The first step is more or less the same as above.
At the point where the distance function $r$ of $\H^n$ attains its maximum, all the principal curvatures are positive. Therefore $H_j$ and $H_k$ are both positive at the point. This together with  $V=\cosh r>0$ yields that $V\left(\frac{H_k}{H_j}\right)$ is a positive constant. Since by the assumption that $H_j$ does not vanish on $\Sigma$ , then $H_j$ and $H_k$ are  positive everywhere in $\Sigma$. From  \cite{Garding}, we know that the principal curvatures of $\Sigma$ belong to $\Gamma_k$ defined in (\ref{G-cone}).

If $j=0$, Theorem \ref{mainthm2} is reduced to the case of Theorem \ref{mainthm1}. In the following, we consider the case $j\geq1.$
Denote the positive constant by $\alpha$, namely, $$\alpha:=V\left(\frac{H_k}{H_j}\right)>0.$$
Applying the Newton-Maclaurin inequality (\ref{N-M}), we note that
$$\frac{H_k}{H_{k-1}}\leq\frac{H_j}{H_{j-1}},$$
which implies
\begin{equation}\label{eq1_thm2}
V\left(\frac{H_{k-1}}{H_{j-1}}\right)\geq\alpha.
\end{equation}
It follows from (\ref{eq1}) and (\ref{Minkowski_Identity}) that
\begin{equation}\label{eq2_thm2}
\int_{\Sigma}V^2H_{k-1}d\mu\leq\int_{\Sigma} VpH_kd\mu=\alpha\int_{\Sigma}pH_jd\mu=\alpha\int_{\Sigma}VH_{j-1}d\mu.
\end{equation}
This gives
$$\int_{\Sigma}V(VH_{k-1}-\alpha H_{j-1})\leq 0.$$
Above together with (\ref{eq1_thm2}) imply
$$V\frac{H_{k-1}}{H_{j-1}}=\alpha,$$
everywhere in $\Sigma.$
By an iteration argument, one obtains
$$V\frac{H_{k-j}}{H_0}=V H_{k-j}=\alpha,$$
everywhere in $\Sigma.$
Finally, from Theorem \ref{mainthm1}, we complete the proof.
\qed
\vspace{5mm}

\noindent{\it Acknowledgment.} The author would like to thank Professor Guofang Wang for suggesting this problem and useful discussions.

\end{document}